\newtheorem{theorem}{Theorem}[section]
\newtheorem{lemma}[theorem]{Lemma}
\newtheorem{proposition}[theorem]{Proposition}
\newtheorem{defn}[theorem]{Definition}
\newtheorem{ex}[theorem]{Example}
\newenvironment{definition}{\begin{defn}\rm}{\end{defn}}
\newenvironment{example}{\begin{ex}\rm}{\end{ex}}
\theoremstyle{remark}
\numberwithin{equation}{section}
\newcommand{\C}{\mathbb{C}}
\newcommand{\Z}{\mathbb{Z}}
\newcommand{\Fl}{\mathcal{F}\!\ell}
\newcommand{\Gr}{\mbox{\rm Gr}}
\newcommand{\ptclass}{[\mathrm{pt}]}
\renewcommand{\Box}{{\includegraphics[height=7pt]{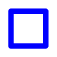}}}
\newcommand{\Rect}{{\includegraphics[height=8pt]{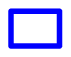}}}
\newcommand{\Shift}{{\includegraphics[height=8pt]{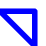}}}
\newcommand{\QED}{{\includegraphics[height=10pt]{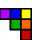}}}
\begin{document}

\title{A Littlewood-Richardson rule for Grassmannian permutations}  


\author{Kevin Purbhoo}
\address{Combinatorics \& Optimization\\
         University of Waterloo\\
         Waterloo, ON, N2L 3G1\\
         CANADA}
\email{kpurbhoo@math.uwaterloo.ca}
\urladdr{http://www.math.uwaterloo.ca/\~{}kpurbhoo/}

\author{Frank Sottile}
\address{Department of Mathematics\\
         Texas A\&M University\\
         College Station\\
         TX \ 77843\\
         USA}
\email{sottile@math.tamu.edu}
\urladdr{http://www.math.tamu.edu/\~{}sottile}
\thanks{Work of Sottile supported by NSF
         CAREER grant DMS-0538734} 

\subjclass[2000]{Primary 14N15; Secondary 05E10}
%
%
\keywords{Flag manifold, Grassmannian, Littlewood-Richardson rule}

\date{}



\begin{abstract}
 We give a combinatorial rule for computing intersection numbers on a 
 flag manifold which come from products of Schubert classes pulled back from
 Grassmannian projections.
 This rule generalizes the known rule for Grassmannians.
\end{abstract}
\maketitle

%
%
%
\section*{Introduction}

One of the main open problems in Schubert calculus is to find an analog of the
Little\-wood-Richardson rule for flag manifolds~\cite[Problem~11]{St00}, and more
generally to find combinatorial formulae for intersection numbers of Schubert varieties. 
This problem was recently solved by Coskun for two-step flag manifolds~\cite{Co07}.  

We give such a combinatorial interpretation for intersection numbers of
Grassmannian Schubert problems on any type $A$ flag manifold.
This number counts certain objects that we call filtered tableaux which 
satisfy conditions coming from the Schubert problem.
When the flag manifold is a Grassmannian this coincides with a standard interpretation
of these numbers obtained from the Littlewood-Richardson rule.
Grassmannian Schubert problems on the flag manifold were studied 
in \cite{RSSS06}; they are exactly the Schubert problems which appear 
in the generalization of the Shapiro conjecture to flag manifolds
given there.

In Section~\ref{Sec:Rule} we define filtered tableaux, give an example, 
and state our formula, which we prove in Section~\ref{Sec:Proofs}.
Our proof uses some identities of~\cite{BS98} which were established using geometry, and
is thus not completely combinatorial.
In Section~\ref{Sec:Comments} we explain how our formula relates to one coming from Monk's
formula~\cite{Mo59} and discuss how to give a purely combinatorial 
proof based on
the rule of Kogan~\cite{Ko01}. 

We thank the Centre de recherches math\'ematiques in Montr\'eal and the 
organizers of the workshop on Combinatorial Hopf Algebras and Macdonald 
Polynomials in May 2007 where this collaboration was begun.  
We also thank Hugh Thomas for helpful discussions, and Chris Hillar
for comments on the manuscript.

%
%
%
\section{A Littlewood-Richardson rule for Grassmannian Schubert problems}\label{Sec:Rule}

For background on flag manifolds and Schubert calculus, see~\cite{Fu97}.
We fix a positive integer $n$ throughout.
Let $\alpha=\{\alpha_1, \alpha_2, \dotsc, \alpha_m\}$
be a non-empty subset of $[n{-}1]:=\{1,2,\dotsc,n{-}1\}$,
which we write in increasing order
\[
   \alpha\ \colon\ 0=\alpha_0<\alpha_1<\dotsb<\alpha_m<\alpha_{m+1}=n\,.
\]
A \Blue{{\it partial flag of type $\alpha$}} is a sequence $F_\bullet$ of linear subspaces 
in $\C^n$
\[
  F_\bullet\ \colon\ \{0\}\ \subset\ F_1\ \subset\ F_2\ \subset\ 
    \dotsb\ \subset\ F_m\ \subset\ \C^n\,,
\]
where $\dim F_i=\alpha_i$.
The set \Blue{$\Fl_\alpha$} of all flags of type $\alpha$ is a complex manifold of dimension
\[
   \Blue{\dim(\alpha)}\ :=\ \sum_{i=1}^m (n-\alpha_i)(\alpha_i-\alpha_{i-1})\,.
\]
Schubert varieties and classes in $\Fl_\alpha$ are indexed by permutations 
$w$ of $\{1,2, \dotsc, n\}$ whose descent set is contained in $\alpha$. 
For a permutation $w$, let $\sigma_w$ be the class of the Schubert variety 
corresponding to $w$, following the conventions in~\cite{Fu97}.
Its cohomological degree is $2\ell(w)$, where $\ell(w)$ counts the number of 
inversions $\{i<j\mid w(i)>w(j)\}$ of $w$.

If $\beta\subset\alpha$ is another subset then there is a
projection $\pi_{\alpha,\beta}\colon \Fl_\alpha\to \Fl_\beta$ whose fibres are products of 
flag varieties.
When $\beta=\{b\}$ is a singleton, $\Fl_\beta$ is the Grassmannian $\Gr(b,n)$ of
$b$-planes in $\C^n$.
In this case, we write $\pi_b$ for $\pi_{\alpha,\beta}$.
We note that $\pi_{\alpha,\beta}^* \sigma_w$ is just the Schubert class
$\sigma_w \in H^*(\Fl_\beta)$.

Schubert classes in $\Gr(b,n)$ are also indexed by partitions
$\lambda$, which are northwest-justified arrays of boxes
in a $b\times(n-b)$ rectangle, $\Rect_b$.  
Associated to a partition
$\lambda$ is the \Blue{{\it Grassmannian permutation}} 
$w$ with \Blue{{\it shape}} $\lambda$ and descent at $b$.
This permutation has a unique descent 
at $b$,
and its first $b$ values are 
\[
  w(i)\ =\ i+\lambda(b+1-i)
  \qquad\mbox{for}\quad i=1,\dotsc,b\,.
\]
Here, $\lambda(i)$ denotes the number of boxes in row $i$ of $\lambda$.  
We write $\sigma_\lambda$ for the Grassmannian Schubert class $\sigma_w$.
Here are three partitions with $b=3$ and $n=7$;
the third is also drawn inside $\Rect_3$.
They correspond to the Grassmannian permutations
$1352467$, $1372456$, and $2471356$.
%
%
%
%
%
\[
  \includegraphics{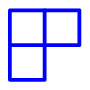}\qquad
  \includegraphics{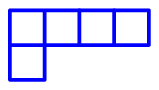}\qquad
  \includegraphics{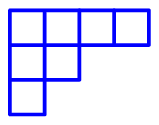}\qquad
  \includegraphics{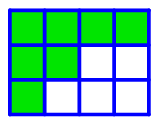}
\]
Let $|\lambda|$ be the number of boxes in $\lambda$.
This is half the cohomological degree of the Schubert class $\sigma_\lambda$ 
and is the complex codimension of the associated Schubert variety.\smallskip

The Littlewood-Richardson rule for the Grassmannian 
expresses a product
$\sigma_\lambda\cdot\sigma_\mu$ of two Schubert classes 
as a sum of classes $\sigma_\nu$ where $\lambda,\mu\subset\nu$ with
$|\nu|=|\mu|+|\lambda|$.
In this rule, the coefficient 
\Blue{$c^{\nu/\mu}_\lambda$}
of $\sigma_\nu$ is the
number of \Blue{{\it Littlewood-Richardson tableaux}} of skew shape $\nu/\mu:=\nu-\mu$ and   
content $\lambda$.
These are fillings of the boxes in $\nu/\mu$ with positive integers such that
 \begin{enumerate}

  \item[(i)]
          The entries weakly increase left-to-right across each row and strictly increase
          down each column.

  \item[(ii)]
         The number of $j$s in the filling is equal to $\lambda(j)$,
         the number of boxes in row $j$ of $\lambda$.

  \item[(iii)] 
         If we read the entries right-to-left across each row and from the top row to the
         bottom row, then at every step we will have encountered at least as many
         occurrences of $i$ as of $i{+}1$ for each positive integer $i$.
 \end{enumerate}

For example, here are some Littlewood-Richardson tableaux.
 \begin{equation}\label{Eq:LR_rule}
   \raisebox{-25pt}{$
   \begin{picture}(52,52)(0,-6.5)
    \put(0,0){\includegraphics[height=39pt]{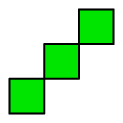}}
    \put(3.5,2.5){2} \put(17,15.5){1} \put(30,28.5){1}
   \end{picture}
    \quad
   \begin{picture}(52,52)(0,-6.5)
    \put(0,0){\includegraphics[height=39pt]{figures/LR1.eps}}
    \put(4,2.5){1} \put(16.5,15.5){2} \put(30,28.5){1}
   \end{picture}
    \quad \qquad
   \begin{picture}(52,52)
    \put(0,0){\includegraphics[height=52pt]{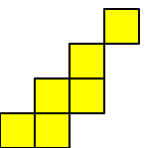}}
                                                     \put(43,41.5){1}
                                    \put(29.5,28.5){2}
                  \put(17  ,15.5){1}\put(29.5,15.5){3}
    \put(4,2.5){1}\put(16.5, 2.5){2}
   \end{picture}
    \qquad
   \begin{picture}(52,52)
    \put(0,0){\includegraphics[height=52pt]{figures/LR2.eps}}
                                                      \put(43,41.5){1}
                                    \put(30  ,28.5){1}
                  \put(16.5,15.5){2}\put(29.5,15.5){2}
    \put(4,2.5){1}\put(16.5, 2.5){3}
   \end{picture}
    \qquad
   \begin{picture}(52,52)
    \put(0,0){\includegraphics[height=52pt]{figures/LR2.eps}}
                                                       \put(43,41.5){1}
                                      \put(30  ,28.5){1}
                    \put(17  ,15.5){1}\put(29.5,15.5){2}
    \put(3.5,2.5){2}\put(16.5, 2.5){3}
   \end{picture}
  $}
 \end{equation}

A \Blue{{\it Grassmannian Schubert class}} in the cohomology ring of 
$\Fl_\alpha$ is the
pullback of a Schubert class along a projection to a Grassmannian.
That is, it has the form $\pi^*_b\sigma_\lambda$ where $b\in\alpha$ and
$\lambda\subset\Rect_b$.
These are indexed by pairs $(b,\lambda)$ with $\lambda\subset\Rect_b$.

A \Blue{{\it Grassmannian Schubert problem}} is a list
$((a_1,\lambda_1),\dotsc,(a_s,\lambda_s))$
with $a_1\leq\dotsb\leq a_s$.
We require that for every $i=1,\dotsc,s$ we have
$a_i\in\alpha$ and $\lambda_i\subset\Rect_{a_i}$, and also 
 \begin{equation}\label{Eq:dimension}
   |\lambda_1| + |\lambda_2| +\dotsb+|\lambda_s|
    \ =\ \dim(\alpha)\,.
 \end{equation}
By the dimension condition~\eqref{Eq:dimension}, we have
 \[
   \prod_{i=1}^s \pi^*_{a_i} \sigma_{\lambda_i}
   \ \in\ H^{2\dim(\alpha)}(\Fl_\alpha)\ =\ 
    \Z\cdot \ptclass_\alpha\,,
 \]
where $\ptclass_\alpha$ is the class of a point in $\Fl_\alpha$.
The problem that we solve is to give
a combinatorial formula for the coefficient
of $\ptclass_\alpha$ in this product.
Note that if $\alpha \supsetneq \{a_1, \dotsc, a_s\}$ this coefficient
is zero (e.g. by \cite[Lemma 1]{Kn00}), and so we will generally
assume that $\alpha = \{a_1, \dotsc, a_s\}$.

Write $\Shift_\alpha$ for the union of all rectangles $\Rect_a$ for 
each $a\in\alpha$, where the rectangles all share the same upper right corner.
Here are three such shapes when $n=7$. 
\[
   \raisebox{30pt}{$\Shift_{235}\ =\ $}\ 
        \includegraphics[height=40pt]{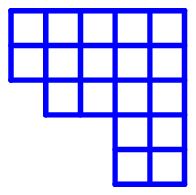}  \quad\qquad
   \raisebox{30pt}{$\Shift_{145}\  =\ $}\ 
        \includegraphics[height=40pt]{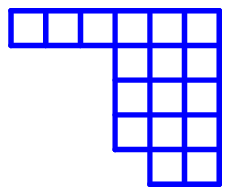}\quad\qquad
   \raisebox{30pt}{$\Shift_{[6]}\ =\ $}\ 
        \includegraphics[height=40pt]{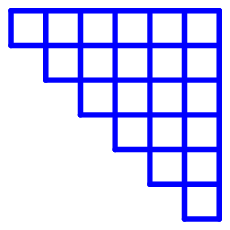}  
\]

A \Blue{{\it shape}} $\mu\subset\Shift_\alpha$ is a subset of boxes which are 
northwest justified. 
For example, when $n=6$, the shaded boxes are four shapes in $\Shift_{234}$.
\[
\includegraphics[height=40pt]{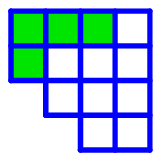}  \quad\qquad
\includegraphics[height=40pt]{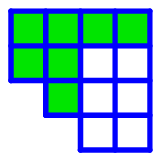}  \quad\qquad
\includegraphics[height=40pt]{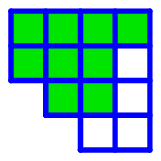}  \quad\qquad
\includegraphics[height=40pt]{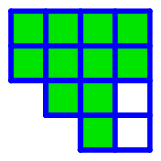}  \quad\qquad
\]

\begin{definition}
Let $\Lambda=((a_1,\lambda_1),\dotsc,(a_s,\lambda_s))$ be a 
Grassmannian Schubert problem.
Set $\alpha=\{a_1,a_2,\dotsc,a_s\}$ and fix a shape $\mu\subset\Shift_\alpha$.
A \Blue{{\it filtered tableau $T_\bullet$  with shape $\mu$ and content $\Lambda$}} 
is a sequence
\[
  \mu_\bullet\ \colon\ \emptyset=\mu_0\ \subset\ \mu_1\ \subset\ 
     \mu_2\ \subset\ \dotsb\ \subset\ \mu_{s+1}\ \subset\ \mu_s=\mu
\]
of shapes together with fillings $T_1,\dotsc,T_s$ of the skew shapes
$\mu_i/\mu_{i-1}$ by positive integers which satisfy 
the following properties.
 \begin{enumerate}

  \item The skew shape $\mu_i/\mu_{i-1}$ must fit entirely
         within the rectangle $\Rect_{a_i}\subset\Shift_\alpha$.

  \item The filling $T_i$ is a Littlewood-Richardson tableau of content $\lambda_i$. 
 \end{enumerate}
Note that we must have $|\mu|=|\lambda_1|+\dotsb+|\lambda_s|$.
\hfill\QED 
\end{definition}

  An induction shows that the coefficient of 
  $\ptclass_b=\sigma_{\includegraphics[height=6pt]{figures/rect.eps}_b}$
  in a product $\sigma_{\lambda_1}\dotsb\sigma_{\lambda_s}$ in 
  $H^*(\Gr(b,n))$ is the number of filtered tableaux with shape
  $\Rect_b$ whose content is the sequence
  $((b,\lambda_1),\dotsc,(b,\lambda_s))$.
  We generalize this to any flag manifold. 

\begin{theorem}\label{Thm:Main}
  Let $\Lambda=((a_1,\lambda_1),\dotsc,(a_s,\lambda_s))$ be a Grassmannian Schubert
  problem on $\Fl_\alpha$.
  Then the coefficient of \/ $\ptclass_\alpha$ in the product
  $\prod_i \pi^*_{a_i}\sigma_{\lambda_i}$ is the number of filtered tableaux with shape 
  $\Shift_\alpha$ and content $\Lambda$. 
\end{theorem}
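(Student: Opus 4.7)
The plan is to proceed by induction on $s$, interpreting each multiplication by $\pi^*_{a_i}\sigma_{\lambda_i}$ as the addition of one more layer to a filtered tableau. As a preliminary, I would establish a bijection between shapes $\mu\subset\Shift_\alpha$ and permutations $w$ of $[n]$ with $\Des(w)\subseteq\alpha$, characterized by the requirement that $\mu\cap\Rect_a$ be the partition of the Grassmannian permutation $\pi_a(w)$ for each $a\in\alpha$. Under this bijection $\Shift_\alpha$ itself corresponds to the longest such permutation, so $\sigma_{w(\Shift_\alpha)}=\ptclass_\alpha$. Write $\sigma_\mu:=\sigma_{w(\mu)}$.

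The key input from \cite{BS98} is a skew-Schubert identity of the form
\[
   \sigma_\mu \cdot \pi^*_b \sigma_\lambda \;=\; \sum_\nu c^{\nu/\mu}_\lambda\, \sigma_\nu,
\]
where the sum is over shapes $\nu\supset\mu$ with skew shape $\nu/\mu\subset\Rect_b$, and $c^{\nu/\mu}_\lambda$ is the classical Littlewood-Richardson coefficient counting fillings of $\nu/\mu$ by content $\lambda$.

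Given this, I let $P_k(\mu)$ denote the coefficient of $\sigma_\mu$ in $\prod_{i=1}^k \pi^*_{a_i}\sigma_{\lambda_i}$, and claim that $P_k(\mu)$ equals the number of partial filtered tableaux $\emptyset=\mu_0\subset\mu_1\subset\dotsb\subset\mu_k=\mu$ satisfying the two conditions of a filtered tableau for the first $k$ entries of $\Lambda$. The base case $k=0$ is immediate. For the inductive step, multiplying the expansion at stage $k$ by $\pi^*_{a_{k+1}}\sigma_{\lambda_{k+1}}$ and applying the skew-Schubert identity term by term yields
\[
   P_{k+1}(\nu) \;=\; \sum_{\substack{\mu\subset\nu \\ \nu/\mu\subset\Rect_{a_{k+1}}}} P_k(\mu) \cdot c^{\nu/\mu}_{\lambda_{k+1}},
\]
which by induction and the Littlewood-Richardson rule counts filtered chains of length $k+1$ ending at $\nu$. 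Setting $k=s$ and $\mu=\Shift_\alpha$ then gives the theorem.

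The main obstacle, and the step I expect to be most delicate, is extracting the skew-Schubert identity in precisely the form above from \cite{BS98}. The results there are cast in terms of chains in Bruhat order and Schubert polynomials, and some care is required to translate them into the language of shapes-in-$\Shift_\alpha$ and to recognize the resulting coefficients as classical Littlewood-Richardson numbers on skew subshapes of $\Rect_b$. Once this translation is in place, the remainder of the argument is a straightforward bookkeeping induction.
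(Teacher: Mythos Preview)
Your inductive strategy—peeling off one Grassmannian factor at a time and identifying the structure constant with a Littlewood--Richardson number via \cite{BS98}—is exactly the paper's approach, but the scaffolding you propose does not hold up. The claimed bijection between shapes $\mu\subset\Shift_\alpha$ and permutations with $\Des(w)\subseteq\alpha$ does not exist: already for $n=3$ and $\alpha=\{1,2\}$ there are two length-one permutations ($213$ and $132$) but only one single-box shape in $\Shift_{[2]}$, so no length-preserving bijection is possible. Consequently the skew-Schubert identity cannot hold as a full expansion in $H^*(\Fl_\alpha)$: the product $\sigma_\mu\cdot\pi_b^*\sigma_\lambda$ will in general contain Schubert classes not indexed by any shape, and your inductive hypothesis $\prod_{i\le k}\pi^*_{a_i}\sigma_{\lambda_i}=\sum_\mu P_k(\mu)\,\sigma_\mu$ breaks down at intermediate stages.

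The paper avoids both issues by passing to the full flag manifold and working with \emph{valley permutations} (those with $w(1)>\dotsb>w(a)$ and $w(a{+}1)<\dotsb<w(n)$ for some $a$), which biject with strict-partition shapes. Crucially, it does not assert an expansion formula: it computes only the coefficient of $\sigma_w$ for a \emph{fixed} valley $w$, and uses the order relation $\leq_a$ from \cite{BS98} (Proposition~\ref{P:a-Bruhat}) together with Lemma~\ref{Lem:descents} to show that every $v$ contributing to that single coefficient is itself a valley permutation. The identification $c^w_{v,a,\lambda}=c^{\mu(w)/\mu(v)}_\lambda$ then follows from the coset-invariance identity of \cite{BS98} (Proposition~\ref{P:identity}) after reversing the first $a$ values of $v$ and $w$ to obtain Grassmannian permutations. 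Theorem~\ref{Thm:Main} for a general $\Fl_\alpha$ is deduced afterward by a separate reduction, inserting one rectangular factor for each $b\in[n{-}1]\setminus\alpha$.
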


\begin{example}
 We use this formula to compute the intersection number $N$, defined by
\[
   N\ptclass_\alpha\ =\ 
   \pi_1^*(\sigma_{\includegraphics[height=4pt]{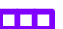}})\cdot
   \pi_2^*(\sigma_{\includegraphics[height=4pt]{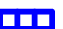}})\cdot
   \pi_3^*(\sigma_{\includegraphics[height=8pt]{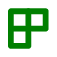}})\cdot
   \pi_4^*(\sigma_{\includegraphics[height=12pt]{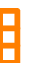}})\cdot
   \pi_5^*(\sigma_{\includegraphics[height=12pt]{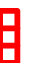}})\,.
\]
 Here, $\alpha=[4]$ and $\Shift_\alpha$ is the full staircase shape.
 There are exactly three sequences of shapes $\mu_\bullet$ which satisfy
 the condition (1) in the definition of filtered tableaux.
\[
  \includegraphics[height=50pt]{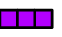}\ \ 
     \raisebox{41pt}{$\subset$\rule{0pt}{13pt}}\ \ 
  \includegraphics[height=50pt]{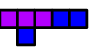}\ \ 
     \raisebox{41pt}{$\subset$}\ \ 
  \includegraphics[height=50pt]{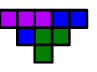}\ \ 
     \raisebox{41pt}{$\subset$}\ \ 
  \includegraphics[height=50pt]{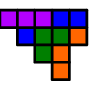}\ \ 
     \raisebox{41pt}{$\subset$}\ \ 
  \includegraphics[height=50pt]{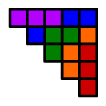}
\]
\[
  \includegraphics[height=50pt]{figures/T1.1.eps}\ \ 
     \raisebox{41pt}{$\subset$}\ \ 
  \includegraphics[height=50pt]{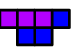}\ \ 
     \raisebox{41pt}{$\subset$}\ \ 
  \includegraphics[height=50pt]{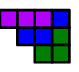}\ \ 
     \raisebox{41pt}{$\subset$}\ \ 
  \includegraphics[height=50pt]{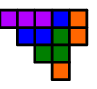}\ \ 
     \raisebox{41pt}{$\subset$}\ \ 
  \includegraphics[height=50pt]{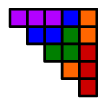}
\]
\[
  \includegraphics[height=50pt]{figures/T1.1.eps}\ \ 
     \raisebox{41pt}{$\subset$}\ \ 
  \includegraphics[height=50pt]{figures/T2.2.eps}\ \ 
     \raisebox{41pt}{$\subset$}\ \ 
  \includegraphics[height=50pt]{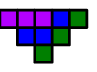}\ \ 
     \raisebox{41pt}{$\subset$}\ \ 
  \includegraphics[height=50pt]{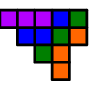}\ \ 
     \raisebox{41pt}{$\subset$}\ \ 
  \includegraphics[height=50pt]{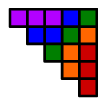}
  \raisebox{-5pt}{\rule{0pt}{0pt}}
\]
 Each of the first two sequences support a unique filtered tableau satisfying condition
 (2), while the third supports two; 
 thus the required intersection number is 4, which may be verified by
 direct computation using the Pieri formula for flag manifolds~\cite{So96}.
 Indeed, there is a unique Littlewood-Richardson tableau of shape $\nu/\mu$ and 
 content $\lambda$ when $\lambda$ is a single row or column and also when 
 the shapes of $\nu/\mu$ and $\lambda$ are the same or rotated by $180^\circ$.
 The only skew shape here which admits more than 
 one Littlewood-Richardson tableau is when  
 $\lambda=\includegraphics[height=10pt]{figures/21.3.eps}$
 and $\nu/\mu=\includegraphics[height=9pt]{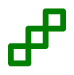}$.
 There are two such Littlewood-Richardson tableaux, given in~\eqref{Eq:LR_rule},
 and this occurs in the middle of the third chain. \hfill\QED 
\end{example}

%
%
%
\section{Proof of Theorem~\ref{Thm:Main}}\label{Sec:Proofs}

Let $\Fl:=\Fl_{[n-1]}$ be the manifold of complete flags in $\C^n$,
which has dimension $\binom{n}{2}$.  Its Schubert classes are
indexed by all
permutations $w$ of the numbers $\{1,2,\dotsc,n\}$.  
We prove a strengthening of Theorem~\ref{Thm:Main} for the full flag manifold and use this
to deduce Theorem~\ref{Thm:Main} for all partial flag manifolds.
We give the key definition of this section.

\begin{definition}
 A permutation $w$ is a \Blue{{\it valley permutation}} with 
 \Blue{{\it floor at $a$}} if 
 \[
    w(1)>w(2)>\dotsb>w(a)
    \qquad\mbox{and}\qquad
   w(a{+}1)<w(a{+}2)<\dotsb<w(n)\,. \eqno{\QED}
 \]
\end{definition}

For example, $53\Red{1}246$ and $64\Red{3}125$ are valley permutations with floor at 3.
We associate a shape $\mu=\mu(w)$ to any valley permutation $w$.
If $w$ has floor at $a$, then $\mu(w)$ is the shape whose rows are
\[
   w(1)-1\ >\ w(2)-1\ >\ \dotsb\ >\ w(a)-1\ \geq\ 0\,.
\]
This has either $a$ or $a{-}1$ rows.
Observe that $w$ is determined by $\mu(w)$ and that $\ell(w)=|\mu(w)|$ where $\ell(w)$
counts the inversions in $w$. 
For example,
\[
    \raisebox{19pt}{$\mu(53\Red{1}246)\ =\ $}\ 
        \includegraphics[height=30pt]{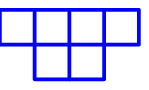}
   \qquad\raisebox{19pt}{and}\qquad
    \raisebox{19pt}{$\mu(64\Red{3}125)\ =\ $}\ 
       \includegraphics[height=30pt]{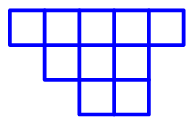}\ 
          \raisebox{19pt}{.}
\]

\begin{theorem}\label{Th:coefficients}
   Let $\Lambda=((a_1,\lambda_1),\dotsc,(a_t,\lambda_t))$ with 
   $a_1\leq a_2\leq\dotsb\leq a_t$ and suppose that $w$ is a valley permutation with
   shape $\mu$. 
   Then the coefficient of $\sigma_w$ in the product
   $\prod_{i=1}^t\pi^*_{a_i}\sigma_{\lambda_i}$ in the cohomology ring of 
   $\Fl$ is the number of filtered tableau with shape $\mu$ and content $\Lambda$.
\end{theorem}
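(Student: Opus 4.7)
The plan is to prove Theorem~\ref{Th:coefficients} by induction on $t$, the number of factors in the Grassmannian Schubert problem $\Lambda$. The base case $t=0$ is immediate: the empty product equals $\sigma_e$, the identity is the unique valley permutation of empty shape, and there is exactly one filtered tableau (the empty sequence) with empty shape and empty content.

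For the inductive step, set $\Lambda' = ((a_1,\lambda_1),\dots,(a_{t-1},\lambda_{t-1}))$ and $X = \prod_{i<t}\pi_{a_i}^*\sigma_{\lambda_i}$. Using Poincar\'e duality on $\Fl$ to extract the coefficient $c_w$ of $\sigma_w$ in $X\cdot \pi_{a_t}^*\sigma_{\lambda_t}$, I would rewrite
\[
c_w \;=\; \int_\Fl X \cdot \bigl(\pi_{a_t}^*\sigma_{\lambda_t}\cdot \sigma_{w_0 w}\bigr),
\]
so that the ``new'' factor is combined with the Poincar\'e dual of $\sigma_w$ before pairing against the inductive data $X$.

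The central technical ingredient, to be drawn from the geometric identities of \cite{BS98}, is a \emph{dual Pieri formula}: for every valley permutation $w$ with shape $\mu$, every $b$, and every partition $\lambda\subseteq\Rect_b$,
\[
\pi_b^*\sigma_\lambda \cdot \sigma_{w_0 w} \;=\; \sum_{w'} c_\lambda^{\mu/\mu(w')}\,\sigma_{w_0 w'},
\]
where $w'$ runs over valley permutations with $\mu(w')\subseteq \mu$ and $\mu/\mu(w')\subseteq \Rect_b$, and $c_\lambda^{\mu/\mu(w')}$ is the ordinary Littlewood-Richardson coefficient counting LR tableaux of skew shape $\mu/\mu(w')$ and content $\lambda$. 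This identity is the only place the geometry of \cite{BS98} enters; it encodes how a Grassmannian Schubert class acts on the Poincar\'e dual of a valley Schubert class, and reduces that action to the classical Grassmannian LR rule.

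Granting this identity, substitution gives $c_w = \sum_{w'} c_{\lambda_t}^{\mu/\mu(w')}\int_\Fl X\cdot\sigma_{w_0 w'}$. The inner integral is the coefficient of $\sigma_{w'}$ in $X$, which by the inductive hypothesis equals the number of filtered tableaux of shape $\mu(w')$ with content $\Lambda'$. The resulting double sum is exactly the number of filtered tableaux of shape $\mu$ with content $\Lambda$: each such tableau is specified uniquely by its penultimate shape $\mu_{t-1}=\mu(w')$, a filtered tableau for $\Lambda'$ ending at $\mu(w')$, and a Littlewood-Richardson filling $T_t$ of $\mu/\mu(w')$ with content $\lambda_t$ that fits in $\Rect_{a_t}$. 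This closes the induction. Finally, Theorem~\ref{Thm:Main} is deduced by identifying the partial-flag class $\ptclass_\alpha$ with the Schubert class of the valley permutation on $\Fl$ whose shape corresponds to $\Shift_\alpha$, so that the partial-flag intersection number is recovered as a full-flag coefficient governed by Theorem~\ref{Th:coefficients}.

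The main obstacle is the dual Pieri identity itself. Once it is established, the remainder of the argument is bookkeeping that mirrors, step by step, the recursive structure of filtered tableaux.
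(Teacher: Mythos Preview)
Your inductive strategy---peel off the last factor, identify the relevant structure constant with a classical Littlewood--Richardson number, and recurse---is the same as the paper's, and the heart of the matter (that $c^w_{v,a_t,\lambda_t}=c^{\mu/\nu}_{\lambda_t}$ for valley $v,w$ of shapes $\nu\subseteq\mu$) is exactly the consequence of \cite{BS98} the paper isolates as Proposition~\ref{P:identity}.

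The gap is that your ``dual Pieri formula'' is stated as a full expansion of $\pi_b^*\sigma_\lambda\cdot\sigma_{w_0w}$ in classes $\sigma_{w_0w'}$ with $w'$ valley, and that is false. Take $n=4$, $b=2$, $w=4312$ (valley, shape $(3,2)$), $\lambda=\Box$. Then $w_0w=1243$, and Monk's rule gives
\[
  \sigma_{1243}\cdot\pi_2^*\sigma_{\Box}\ =\ \sigma_{1342}+\sigma_{1423}\,.
\]
Here $1342=w_0\cdot 4213$ with $4213$ valley of shape $(3,1)$, but $1423=w_0\cdot 4132$ and $4132$ has descent set $\{1,3\}$, so it is \emph{not} a valley permutation; your formula misses this term. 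What actually makes the induction close is not that such terms are absent, but that they pair to zero against $X=\prod_{i<t}\pi^*_{a_i}\sigma_{\lambda_i}$: by Lemma~\ref{Lem:descents} every $\sigma_v$ appearing in $X$ has no descent after $a_{t-1}$, while Proposition~\ref{P:a-Bruhat}(2) forces any $v$ with $v\leq_{a_t}w$ to be decreasing on $[1,a_t]$. Only the conjunction of these two facts pins $v$ down as a valley permutation. The paper proves and uses both; your proposal packages them into a single identity that does not hold on its own.

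Your last step is also off. For general $\alpha$ the shape $\Shift_\alpha$ has repeated row lengths (e.g.\ $\alpha=\{2,4\}$, $n=6$ gives rows $4,4,2,2$), whereas a valley permutation has strictly decreasing row lengths $w(1){-}1>w(2){-}1>\cdots$; so there is no valley permutation of shape $\Shift_\alpha$, and $\ptclass_\alpha$ cannot be identified with one. The paper instead deduces Theorem~\ref{Thm:Main} for $\Fl_\alpha$ from the full-flag case by inserting a rectangular factor $(b,\kappa)$ for each missing $b\notin\alpha$ and using the Poincar\'e pairing on $\Fl_{\alpha\cup\{b\}}$.
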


Since the class $\ptclass$ of a point in $H^*(\Fl)$ is indexed by the longest permutation,
which is a valley permutation with shape $\Shift_{[n{-}1]}$, Theorem~\ref{Th:coefficients}
implies Theorem~\ref{Thm:Main} for $\Fl_{[n-1]}$.
We deduce Theorem~\ref{Thm:Main} for general flag manifolds $\Fl_\alpha$ from
the case for $\Fl_{[n-1]}$.

\begin{proof}[Proof of Theorem~\ref{Thm:Main}]
Suppose that $b \notin \alpha$, say $ \alpha_i < b <\alpha_{i+1}$, and set
$\alpha' := \alpha \cup \{b\}$.  
We assume that the theorem holds for $\Fl_{\alpha'}$,
and deduce it for $\Fl_{\alpha}$.

Let $\kappa$ be the rectangular partition
with $b{-}\alpha_i$ rows and $\alpha_{i+1}{-}b$ columns. 
Set 
$\Lambda' := ((a_1,\lambda_1),\dotsc, (b, \kappa), \dotsc, (a_s,\lambda_s))$.
Note that $\pi_{\alpha',b}^* \sigma_\kappa$ is dual
to $\pi_{\alpha',\alpha}^*\ptclass_\alpha$ in $H^*(\Fl_{\alpha'})$
under the Poincar\'e pairing.  
Thus, for any $\tau \in H^*(\Fl_\alpha)$ we have
\[
  \big[\ptclass_{\alpha'}\big]\,
  \pi_{\alpha',b}^* \sigma_\kappa \cdot \pi_{\alpha',\alpha}^* \tau
  \ = \ \big[\ptclass_\alpha\big]\, \tau\,,
\]
where $\big[\ptclass_\alpha\big] \tau$
denotes the coefficient of $\ptclass_\alpha$ in $\tau$.
In particular,
 \begin{equation} \label{Eqn:Induction}
  \big[\ptclass_{\alpha'}\big]
   \prod_{(a, \lambda) \in \Lambda} \pi_a^* \sigma_\lambda
  \ = \ \big[\ptclass_\alpha\big]
   \prod_{(a', \lambda') \in \Lambda'} \pi_{a'}^* \sigma_{\lambda'}\,.
 \end{equation}
There is a bijection between filtered tableaux with shape $\Shift_\alpha$ and
content $\Lambda$ and those with shape $\Shift_{\alpha'}$ and content $\Lambda'$, obtained
by inserting the unique Littlewood-Richard\-son tableau of shape and content $\kappa$ into
the filtration.  
Thus counting either set of filtered tableaux gives the coefficient~\eqref{Eqn:Induction}.
\end{proof}

A Schubert class $\sigma_w$ \Blue{{\it appears}} in a product 
$\sigma_u\dotsb\sigma_v$ of Schubert classes if, when we expand the product in the basis
of Schubert classes, $\sigma_w$ appears with a positive coefficient.

We will prove Theorem~\ref{Th:coefficients} by induction on the number of terms $t$ in the
product. 
Important for this is the following proposition which summarizes some discussion at the 
beginning of Section~1 in~\cite{BS98}.

\begin{proposition}\label{P:a-Bruhat}
   If a Schubert class $\sigma_w$ appears in the product
   $\sigma_v\cdot \pi^*_a\sigma_\lambda$, then  the following conditions hold.
\begin{itemize}
  \item[$(1)$] Whenever $i\leq a<j$, we have $w(i)\geq v(i)$ and 
                  $w(j)\leq v(j)$.
  \item[$(2)$] If $i<j\leq a$ and $v(i)<v(j)$, then $w(i)<w(j)$.
               If $a<i<j$ and $v(i)<v(j)$, then $w(i)<w(j)$.
\end{itemize}
\end{proposition}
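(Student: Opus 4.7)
The plan is to induct on $|\lambda|$, reducing to Monk's formula for the base case. First, I would observe that $\pi^*_a\sigma_\lambda=\sigma_{u_\lambda}$, where $u_\lambda$ is the Grassmannian permutation of shape $\lambda$ with (unique) descent at $a$; this turns the proposition into a statement about a product of two Schubert classes on $\Fl$.

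For the base case $|\lambda|=1$, I would apply Monk's formula. Here $u_{(1)}=s_a$ and
\[
  \sigma_v\cdot\sigma_{s_a}\ =\
  \sum_{\substack{i\leq a<j\\ \ell(v\cdot(i,j))=\ell(v)+1}}
     \sigma_{v\cdot(i,j)}\,.
\]
For each summand $w=v\cdot(i,j)$, condition~(1) is immediate from the structure of the swap. Condition~(2) requires a case analysis on how the pair $\{i',j'\}$ interacts with $\{i,j\}$; the crucial non-trivial case, e.g.\ $i'=i<j'\leq a$ with $v(i')<v(j')$, invokes the cover condition $\ell(v\cdot(i,j))=\ell(v)+1$, which forbids any $k\in(i,j)$ with $v(i)<v(k)<v(j)$. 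Taking $k=j'$ forces $v(j')\geq v(j)$, whence $w(i')=v(j)<v(j')=w(j')$. The other subcases, including those for $a<i'<j'$, proceed symmetrically.

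For the inductive step, I would combine two observations. First, conditions (1) and (2) are transitive under composition: if $w$ satisfies them relative to $v$ and $w'$ relative to $w$, then $w'$ satisfies them relative to $v$, by direct inspection. Second, the Pieri rule on $\Gr(a,n)$ gives $\sigma_{(1)}^{|\lambda|}=\sum_\mu f^\mu\sigma_\mu$ with $f^\lambda>0$ (the number of standard Young tableaux of shape $\lambda$); pulling back, $\pi^*_a\sigma_\lambda$ appears with positive coefficient in $(\pi^*_a\sigma_{(1)})^{|\lambda|}$. By non-negativity of the Schubert structure constants on $\Fl$, any $\sigma_w$ appearing in $\sigma_v\cdot\pi^*_a\sigma_\lambda$ also appears in $\sigma_v\cdot(\pi^*_a\sigma_{(1)})^{|\lambda|}$. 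Iterating Monk's formula $|\lambda|$ times then exhibits a chain $v=v_0,v_1,\dotsc,v_{|\lambda|}=w$ in which each step is an allowable Monk cover for $a$; applying the base case along the chain and invoking transitivity delivers (1) and (2) for $w$ relative to $v$.

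The main obstacle is the case analysis for condition~(2) in the base case: one must juggle four positions $i,j,i',j'$ and carefully use the cover condition in each non-trivial sub-case. Once this is settled, the inductive step is essentially formal, resting only on positivity of structure constants and transitivity of the two conditions.
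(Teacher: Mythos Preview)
Your argument is correct. The base case via Monk's formula is handled carefully; the cover condition $\ell(v\,r_{ij})=\ell(v)+1$ (no $k\in(i,j)$ with $v(i)<v(k)<v(j)$) is exactly what is needed in the nontrivial sub-cases of condition~(2), and your transitivity claim is immediate once stated. The reduction from general $\lambda$ to the box via $\sigma_{(1)}^{|\lambda|}=\sum_\mu f^\mu\sigma_\mu$ on $\Gr(a,n)$, together with nonnegativity of Schubert structure constants on $\Fl$, is a clean way to extract a Monk chain from $v$ to $w$.

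The paper, however, does not prove this proposition at all: it simply invokes it as a summary of results from Bergeron--Sottile~\cite{BS98}, where conditions (1) and (2) are packaged as the \emph{$a$-Bruhat order} $v\leq_a w$, and the fact that $\sigma_w$ appearing in $\sigma_v\cdot\pi_a^*\sigma_\lambda$ forces $v\leq_a w$ is established there by different (partly geometric) means. So your approach is genuinely different in that you supply a self-contained elementary proof using only Monk's formula and positivity, whereas the paper treats the statement as an imported black box. The advantage of your route is that it keeps the argument inside the combinatorics already present in the paper; the advantage of citing~\cite{BS98} is brevity and access to the stronger order-theoretic framework (e.g.\ Proposition~\ref{P:identity}) that the paper needs anyway.
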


In~\cite{BS98}, it is shown that the conditions in Proposition~\ref{P:a-Bruhat} 
define an order relation $v\leq_a w$, which is a suborder of the Bruhat order.
We deduce an important lemma.

\begin{lemma}\label{Lem:descents}
  If $\sigma_w$ appears in $\prod_{i=1}^t\pi^*_{a_i}\sigma_{\lambda_i}$ then 
  $w$ has no descents after position $a_t$.
\end{lemma}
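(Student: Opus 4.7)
The plan is to prove the lemma by induction on $t$, with Proposition~\ref{P:a-Bruhat}(2) doing the heavy lifting in the inductive step.

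For the base case $t=1$, the class $\pi^*_{a_1}\sigma_{\lambda_1}$ equals the single Schubert class $\sigma_u$ where $u$ is the Grassmannian permutation of shape $\lambda_1$ with unique descent at $a_1$. So $\sigma_w$ appears in this product only when $w = u$, which has no descents after position $a_1 = a_t$.

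For the inductive step, I would split off the last factor. Since Schubert structure constants are non-negative, if $\sigma_w$ appears in $\prod_{i=1}^{t}\pi^*_{a_i}\sigma_{\lambda_i}$, then there exists some permutation $v$ such that $\sigma_v$ appears in $\prod_{i=1}^{t-1}\pi^*_{a_i}\sigma_{\lambda_i}$ and $\sigma_w$ appears in $\sigma_v\cdot \pi^*_{a_t}\sigma_{\lambda_t}$. By the inductive hypothesis, $v$ has no descents after position $a_{t-1}$, and since $a_{t-1}\leq a_t$, $v$ has no descents after position $a_t$ either, i.e.\ $v(i) < v(j)$ whenever $a_t < i < j$.

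Now I apply Proposition~\ref{P:a-Bruhat}(2) with $a = a_t$ to the product $\sigma_v\cdot\pi^*_{a_t}\sigma_{\lambda_t}$: for any pair $a_t < i < j$ we have $v(i) < v(j)$, and the proposition gives $w(i) < w(j)$. Specialising to $j = i{+}1$ shows that $w$ has no descents after position $a_t$, completing the induction.

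I do not anticipate a real obstacle here; the only subtlety is remembering that the appearance of $\sigma_w$ in an iterated product unwinds, via non-negativity of structure constants, into an intermediate $\sigma_v$ to which the hypothesis can be applied, and that the monotonicity condition in Proposition~\ref{P:a-Bruhat}(2) is preserved when passing from $v$ to $w$ precisely in the range $i,j > a_t$ that we need.
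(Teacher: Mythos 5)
Your proof is correct and follows essentially the same route as the paper: induction on $t$, peeling off the last factor via positivity of the structure constants to obtain an intermediate $\sigma_v$, and then applying Proposition~\ref{P:a-Bruhat}(2) with $a=a_t$ to transfer the no-descent property from $v$ to $w$. The only (immaterial) difference is that the paper starts the induction at $t=0$ using the identity permutation, whereas you start at $t=1$ using the Grassmannian permutation of shape $\lambda_1$.
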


\begin{proof}
 We prove this by induction on $t$.
 It holds when $t=0$, as the multiplicative identity in cohomology
 is the Schubert class indexed by the identity permutation.

 Suppose that $\sigma_w$ appears in the product
 $\prod_{i=1}^t\pi^*_{a_i}\sigma_{\lambda_i}$.
 Then there is some permutation $v$ such that $\sigma_v$ 
 appears in the product $\prod_{i=1}^{t-1}\pi^*_{a_i}\sigma_{\lambda_i}$
 and $\sigma_w$ appears in the product $\sigma_v\cdot\pi^*_{a_t}\sigma_{\lambda_t}$. 
 Hence $v\leq_{a_t} w$.
 Since $v$ has no descents after position $a_{t-1}$ and $a_{t-1}\leq a_t$, 
 condition (2) of Proposition~\ref{P:a-Bruhat} implies that 
 $w$ has no descents after position $a_t$. 
\end{proof}

For permutations $v,w$ and a partition $\lambda\subset\Rect_a$, let
$c^w_{v,a,\lambda}$ be the coefficient of  $\sigma_w$ in
  the product $\sigma_v\cdot\pi^*_a\sigma_\lambda$.
One of the main results in~\cite{BS98} is 
the following identity.

\begin{proposition}\label{P:identity}
  Suppose that $v\leq_a w$ and $x\leq_a z$ with $wv^{-1}=zx^{-1}$.
  Then for every $\lambda\subset \Rect_a$ we have
  $c^w_{v,a,\lambda}=c^z_{x,a,\lambda}$.
\end{proposition}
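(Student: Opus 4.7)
The plan is to reduce the identity to the Pieri case (row shapes $\lambda = (r)$) via a Jacobi--Trudi style expansion, and then to handle the Pieri case directly using the structure of chains in the $a$-Bruhat order. The guiding principle throughout is that whenever $v \leq_a w$, the interval $[v,w]_a$ in the $a$-Bruhat order, together with its covering labels, is essentially determined by $\zeta := wv^{-1}$ and not by $v$ and $w$ individually.

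First, I would use the classical Jacobi--Trudi identity in $H^*(\Gr(a,n))$, writing
\[
\sigma_\lambda \;=\; \det\bigl(\sigma_{(\lambda_i - i + j)}\bigr)_{1 \le i, j \le \ell(\lambda)},
\]
where the entries are single-row Schubert classes and we interpret $\sigma_{(0)} = 1$, $\sigma_{(k)} = 0$ for $k < 0$. Pulling back via $\pi_a^*$, commuting $\pi_a^*$ through the determinant, and expanding the product $\sigma_v \cdot \pi_a^* \sigma_\lambda$ by repeated multiplication with single-row classes $\pi_a^* \sigma_{(r)}$, one expresses $c^w_{v,a,\lambda}$ as an alternating sum of products of Pieri coefficients, summed over chains of intermediate permutations. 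If each Pieri coefficient depends only on the ``difference'' of consecutive permutations in the chain, then grouping the alternating sum according to the global difference shows the total depends only on $\zeta$.

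Second, for the Pieri base case, I would invoke the Pieri formula for the full flag manifold (as in~\cite{So96}), which expresses $c^w_{v,a,(r)}$ as the number of saturated chains $v = v_0 \lessdot_a v_1 \lessdot_a \dotsb \lessdot_a v_r = w$ in the $a$-Bruhat order whose covering transpositions satisfy a specific cyclic monotonicity condition. The key verification is that this chain count depends only on $\zeta = wv^{-1}$: one checks that there is a natural label-preserving poset isomorphism $[v,w]_a \cong [x,z]_a$ whenever $wv^{-1} = zx^{-1}$, and that the monotonicity condition on transpositions is intrinsic to the chain's labels, not to the permutations $v_i$ themselves.

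The main obstacle will be establishing this labeled poset isomorphism. The conditions in Proposition~\ref{P:a-Bruhat} that define $v \leq_a w$ constrain how $v$ and $w$ relate at positions split by $a$, and one must verify that covering relations $v_{i-1} \lessdot_a v_i$ correspond bijectively to a canonical factorization of $\zeta$ into transpositions $t_{pq}$ with $p \le a < q$, in such a way that the bijection is independent of the basepoint $v$. Once this factorization and its compatibility with the Pieri monotonicity condition is in hand, the equality $c^w_{v,a,\lambda} = c^z_{x,a,\lambda}$ follows by matching chains in $[v,w]_a$ with chains in $[x,z]_a$ under the isomorphism, and applying the Jacobi--Trudi reduction.
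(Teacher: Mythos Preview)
The paper does not prove this proposition; it is quoted from~\cite{BS98} as one of the main results there, and the introduction explicitly notes that the proof in~\cite{BS98} uses geometry. So your proposal is to be compared not with an argument in the present paper but with the one in~\cite{BS98}.

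Your strategy is sound in outline and close in spirit to what~\cite{BS98} does, though with a different reduction step. The labeled-interval isomorphism $[v,w]_a \cong [x,z]_a$ that you flag as ``the main obstacle'' is indeed the heart of the matter: it is itself one of the principal theorems of~\cite{BS98}, and its proof occupies a substantial portion of that paper---it is not something one simply ``checks.'' Once that isomorphism is available, your Jacobi--Trudi reduction to the Pieri case does work, since the Pieri coefficients of~\cite{So96} are chain-counts in $[u,u']_a$ whose defining monotonicity condition is phrased purely in terms of the covering labels, and these are preserved by the isomorphism. This is a legitimate alternative to the route taken in~\cite{BS98}, which derives the general-$\lambda$ identity more directly via substitutions in Schubert polynomials together with a geometric argument rather than through Jacobi--Trudi. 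Either way the real content is the interval isomorphism; your sketch correctly locates it but does not shorten it.
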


Suppose that a shape $\nu\subset\Shift_{[n-1]}$ has either $b{-}1$ or $b$ rows.
We define $\nu|_b$ to be the intersection of the shape $\nu$ with $\Rect_b$.

\begin{proof}[Proof of Theorem~\ref{Th:coefficients}]
We proceed by induction on $t$.  The theorem holds (trivially) for $t=0$;
assume that $t>0$ and that it holds for $t-1$.

 Let $w$ be a valley permutation with shape $\mu$, and suppose that $w$ appears in the
 product $\prod_{i=1}^t \pi^*_{a_i}\sigma_{\lambda_i}$.
 Then by Lemma~\ref{Lem:descents}, $w$ has a floor at $a_t$.
 Let us expand the product
\[
   \prod_{i=1}^{t-1} \pi^*_{a_i}\sigma_{\lambda_i}
   \ =\ 
   \sum_v c^v \sigma_v\,.
\]
 Then the coefficient of $\sigma_w$ in the product
 $\prod_{i=1}^t \pi^*_{a_i}\sigma_{\lambda_i}$ is the sum
\[
   \sum_{v\leq_{a_t} w} c^v\cdot  c^w_{v,a_t,\lambda_t}\,.
\]

 Suppose that $v\leq_{a_t} w$.  
 Since $w$ has a floor at $a_t$, Proposition~\ref{P:a-Bruhat}(2) implies that 
\[
   v(1)\ >\ v(2)\ >\ \dotsb\ >\ v(a_t)\,.
\]
 If the coefficient $c^v\neq 0$, so that $v$ can contribute to this sum,
 then  Lemma~\ref{Lem:descents} implies that $v$ has no descents after position $a_{t-1}$.
 Since $a_t{-}1\leq a_{t-1}\leq a_t$, this implies that $v$ is a valley permutation with a
 floor at $a_t$.

 Let $\nu$ be the shape of $v$.
 Since both $w$ and $v$ have floor at $a_t$, both $\mu$ and $\nu$ have
 either $a_t{-}1$ or $a_t$ rows, and
 thus $\mu/\nu\subset\Rect_{a_t}$.
 The theorem would follow if we knew that 
 \begin{equation}\label{Eq:identity}
   c^w_{v,a_t,\lambda_t}\ =\ c^{\mu/\nu}_{\lambda_t}\,.
 \end{equation}
 To see this, note that there is a bijection between filtered tableaux on 
 $\mu$ with content $((a_1,\lambda_1),\dotsc,(a_t,\lambda_t))$ and 
 triples $(\nu,T_\bullet,T)$ where $\nu\subset\mu$, $T_\bullet$ is a
 filtered tableau of shape $\nu$ and content
 $((a_1,\lambda_1),\dotsc,(a_{t-1},\lambda_{t-1}))$, and $T$ is a Littlewood-Richardson
 tableau of shape $\mu/\nu$ and content $\lambda$;
 hence the number of these is 
\[
  \sum_{v \leq_{a_t} w} c^v \cdot c^{\mu/\nu}_{\lambda_t}\,.
\]

 But~\eqref{Eq:identity} follows from Proposition~\ref{P:identity}.
 Let $x$ (respectively $z$) be the permutation obtained from $v$ (respectively from $w$)
 by reversing the first $a_t$ values,
 i.e.
\[
  x(i)\ = \  %
  \begin{cases}
    v(a_t+1-i) &\text{if $1 \leq i\leq a_t$} \\
    v(i) &\text{otherwise.} 
  \end{cases}
\]
 Then $x$ and $z$ are Grassmannian permutations with descent $a_t$, and 
 shapes $\nu|_{a_t}$ and $\mu|_{a_t}$, respectively,
 and $\mu/\nu= (\mu|_{a_t})/(\nu|_{a_t})$.
 Furthermore, $x\leq_{a_t} z$ and $wv^{-1}=zx^{-1}$, from which we
 deduce~\eqref{Eq:identity}.
\end{proof}

%
%
%
\section{Further Remarks}\label{Sec:Comments}

When all the classes $\sigma_{\lambda_i}$ have 
degree 2 ($\lambda_i=\Box$, a single box),  
the multiplication formula $\sigma_w\cdot\pi^*_{a_i}\Box$ 
is due to Monk~\cite{Mo59}.  Monk's formula states that
\begin{equation}
\label{E:Monk}
  \sigma_w \cdot \pi^*_{a_i}\Box \ = 
  \sum_{\substack{j \leq i < k \\ \ell(w r_{jk}) = \ell(w)+1}}
  \sigma_{w r_{jk}}\,,
\end{equation}
where $r_{jk} \in S_n$ is the transposition swapping $j$ and $k$.
Iterating Monk's formula one sees that the coefficient of
$\ptclass_\alpha$ in a
product $\prod_{i=1}^{\dim(\alpha)} \pi_{a_i}^* \Box$ is
obtained by counting certain chains in the Bruhat order.  It is
not hard to see directly from~\eqref{E:Monk} that each permutation
$w$ in such a chain corresponds to a shape $\mu$ in 
$\Shift_\alpha$ such that the number of boxes in the column $j$ 
of $\mu$ equals $\#\{k \in [j]\ |\ w(k)>w(j+1)\}$, for 
all $j \in \{\min(\alpha), \dotsc, n-1\}$.  Indeed, if the permutation
$w$ does not correspond to a shape, then no
term on the right hand side of \eqref{E:Monk} corresponds to
a shape.
It follows that the coefficient is the number of chains of shapes 
in $\Shift_\alpha$
where the $i$th step involves adding a box in $\Rect_{a_i}$, which
is the answer given by our formula.

For example, we have
\[
    2\ptclass_{[3]}\ =\ 
   \pi^*_1 \includegraphics{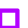}\cdot
   \pi^*_1 \includegraphics{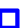}\cdot
   \pi^*_2 \includegraphics{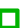}\cdot
   \pi^*_2 \includegraphics{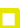}\cdot
   \pi^*_3 \includegraphics{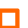}\cdot
   \pi^*_3 \includegraphics{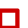}\ ,
\]
as there are two chains of shapes which satisfy this condition.
\[
  \includegraphics[height=30pt]{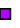}\ \ 
     \raisebox{21pt}{$\subset$}\ \ 
  \includegraphics[height=30pt]{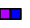}\ \ 
     \raisebox{21pt}{$\subset$}\ \ 
  \includegraphics[height=30pt]{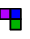}\ \ 
     \raisebox{21pt}{$\subset$}\ \ 
  \includegraphics[height=30pt]{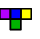}\ \ 
     \raisebox{21pt}{$\subset$}\ \ 
  \includegraphics[height=30pt]{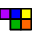}\ \ 
     \raisebox{21pt}{$\subset$}\ \ 
  \includegraphics[height=30pt]{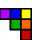}
\]
\[
  \includegraphics[height=30pt]{figures/M1.eps}\ \ 
     \raisebox{21pt}{$\subset$}\ \ 
  \includegraphics[height=30pt]{figures/M2.eps}\ \ 
     \raisebox{21pt}{$\subset$}\ \ 
  \includegraphics[height=30pt]{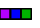}\ \ 
     \raisebox{21pt}{$\subset$}\ \ 
  \includegraphics[height=30pt]{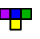}\ \ 
     \raisebox{21pt}{$\subset$}\ \ 
  \includegraphics[height=30pt]{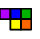}\ \ 
     \raisebox{21pt}{$\subset$}\ \ 
  \includegraphics[height=30pt]{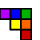}\smallskip
\]

It is possible to give a purely combinatorial proof of
Theorem~\ref{Thm:Main} using Kogan's formula~\cite[Theorem 2.4]{Ko01}.
This rule is based on insertion of RC-graphs and gives the coefficient 
$c^w_{v,a,\lambda}$,  
when $v(a{+}1)<v(a{+}2)<\dotsb<v(n)$.
In particular, this gives a formula for the product when $v$ and $w$ are a valley
permutations with a floor at $a$, and so we may use this in a 
formula for the intersection numbers of Theorem~\ref{Thm:Main}
to give a combinatorial proof. 

The conventions in~\cite{Ko01} for Schubert classes differ from those used
in this article.
To compare conventions, it is necessary to replace our permutations $w$ by
$\widetilde{w}=w_0 w w_0$ throughout.
In particular, a cohomology class indexed by $w$ in this article is the class
indexed by $\widetilde{w}$ in~\cite{Ko01}.
Thus our condition on $v$ becomes
$\widetilde{v}(1)<\widetilde{v}(2)<\dotsb<\widetilde{v}(a)$, which is the 
condition found in~\cite{Ko01}.

To deduce Theorem~\ref{Thm:Main} from this formula, we would need to show that,
for valley permutations $w,v$ with floor at $a$, Kogan's rule for $c^w_{v,a,\lambda}$
coincides with the Littlewood-Richardson rule for $c^{\mu/\nu}_\lambda$,
where $\nu=\mu(v)|_a$ and $\mu=\mu(w)|_a$.
Here, $\mu(v)$ is the shape of $v$ and $\mu(w)$ is the shape of $w$.
While this is certainly possible, we chose not to pursue this.

%
%
%
%
%
%
\appendix
\section{More examples}

\begin{example}
Consider the following product in $\Fl_{235}$,
\[
   \pi_2^*(\sigma_{\includegraphics[height=4pt]{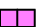}})\cdot
   \pi_2^*(\sigma_{\includegraphics[height=4pt]{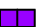}})\cdot
   \pi_3^*(\sigma_{\raisebox{-4pt}{\includegraphics[height=8pt]{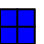}}})\cdot
   \pi_3^*(\sigma_{\raisebox{-4pt}{\includegraphics[height=8pt]{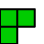}}})\cdot
   \pi_5^*(\sigma_{\includegraphics[height=4pt]{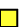}})\cdot
   \pi_5^*(\sigma_{\raisebox{-8pt}{\includegraphics[height=12pt]{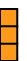}}})\cdot
   \pi_5^*(\sigma_{\raisebox{-8pt}{\includegraphics[height=12pt]{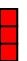}}})\,.
\]

By Theorem~\ref{Thm:Main}, the coefficient of $\ptclass$
is the number of filtered tableau with content
$((2,\includegraphics[height=6pt]{appendix/2.1.eps}), 
  (2,\includegraphics[height=6pt]{appendix/2.2.eps}),
  (3,\raisebox{-2pt}{\includegraphics[height=12pt]{appendix/22.3.eps}}),
  (3,\raisebox{-2pt}{\includegraphics[height=12pt]{appendix/21.4.eps}}),
  (5,\includegraphics[height=6pt]{appendix/1.5.eps}),
  (5,\raisebox{-6pt}{\includegraphics[height=18pt]{appendix/111.6.eps}}),
  (5,\raisebox{-6pt}{\includegraphics[height=18pt]{appendix/111.7.eps}})),
$
which is 18:

\[
  \begin{picture}(70,70)
    \put(0,0){\includegraphics[height=70pt]{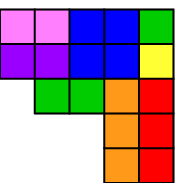}}
  \end{picture}
      \qquad
  \begin{picture}(70,70)
    \put(0,0){\includegraphics[height=70pt]{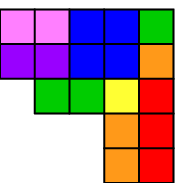}}
  \end{picture}
      \qquad
  \begin{picture}(70,70)
    \put(0,0){\includegraphics[height=70pt]{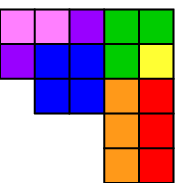}}
  \end{picture}
      \qquad
  \begin{picture}(70,70)
    \put(0,0){\includegraphics[height=70pt]{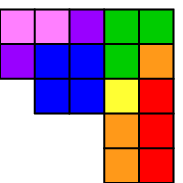}}
  \end{picture}
     \qquad
  \begin{picture}(70,70)
    \put(0,0){\includegraphics[height=70pt]{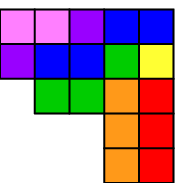}}
  \end{picture}
\]

\[
  \begin{picture}(70,70)
    \put(0,0){\includegraphics[height=70pt]{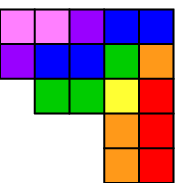}}
  \end{picture}
      \qquad
  \begin{picture}(70,70)
    \put(0,0){\includegraphics[height=70pt]{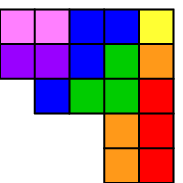}}
  \end{picture}
      \qquad
  \begin{picture}(70,70)
    \put(0,0){\includegraphics[height=70pt]{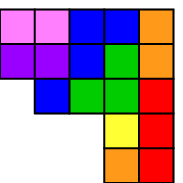}}
  \end{picture}
      \qquad
  \begin{picture}(70,70)
    \put(0,0){\includegraphics[height=70pt]{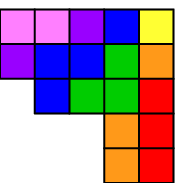}}
  \end{picture}
      \qquad
  \begin{picture}(70,70)
    \put(0,0){\includegraphics[height=70pt]{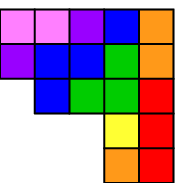}}
  \end{picture}
\]

\[
  \begin{picture}(70,70)
    \put(0,0){\includegraphics[height=70pt]{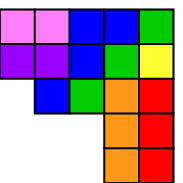}}
    \put(31.5,31){2} \put(45.5,44.5){1} \put(58.5,58){1}
  \end{picture}
      \qquad
  \begin{picture}(70,70)
    \put(0,0){\includegraphics[height=70pt]{appendix/T5.eps}}
    \put(32,31){1} \put(45,44.5){2} \put(58.5,58){1}
  \end{picture}
      \qquad
  \begin{picture}(70,70)
    \put(0,0){\includegraphics[height=70pt]{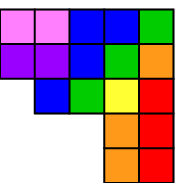}}
    \put(31.5,31){2} \put(45.5,44.5){1} \put(58.5,58){1}
  \end{picture}
     \qquad
  \begin{picture}(70,70)
    \put(0,0){\includegraphics[height=70pt]{appendix/T6.eps}} 
    \put(32,31){1} \put(45,44.5){2} \put(58.5,58){1}
  \end{picture}
\]

\[
  \begin{picture}(70,70)
    \put(0,0){\includegraphics[height=70pt]{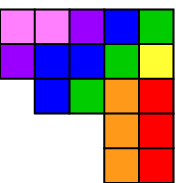}}
    \put(31.5,31){2} \put(45.5,44.5){1} \put(58.5,58){1}
  \end{picture}
      \qquad
  \begin{picture}(70,70)
    \put(0,0){\includegraphics[height=70pt]{appendix/T7.eps}}
    \put(32,31){1} \put(45,44.5){2} \put(58.5,58){1}
  \end{picture}
      \qquad
  \begin{picture}(70,70)
    \put(0,0){\includegraphics[height=70pt]{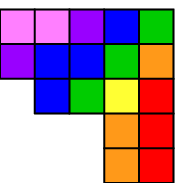}}
    \put(31.5,31){2} \put(45.5,44.5){1} \put(58.5,58){1}
  \end{picture}
      \qquad
  \begin{picture}(70,70)
    \put(0,0){\includegraphics[height=70pt]{appendix/T8.eps}}
    \put(32,31){1} \put(45,44.5){2} \put(58.5,58){1}
  \end{picture}
\]

\end{example}

\begin{minipage}[t]{2.2in}
\mbox{\ }

\begin{example}
  We remarked in Section~\ref{Sec:Comments} that, when every partition 
  is a single box ($\lambda_i=\Box$), a filtered tableau is a particular
  saturated chain of shapes in $\Shift_\alpha$.
  When $n=6$ we look at this for the problem 
\[
   (\pi^*_2 \includegraphics{figures/b2.eps})^4 \cdot
   (\pi^*_3 \includegraphics{figures/b3.eps})^5\cdot
   (\pi^*_4 \includegraphics{figures/b6.eps})^4
\]
 in $\Fl_{234}$.

 \quad To the right is the poset of shapes $\mu$ in $\Shift_{234}$,
 where at level $t$ (from the top) the shape has at most 
 $a_t$ and at least $a_t{-}1$ rows.

\quad Further to the right, we count the number of chains in this poset,
 which shows that the intersection number is 262.
\end{example}

\end{minipage}
\qquad
\begin{minipage}[t]{3.1in}
\[
  \begin{picture}(100,350)(0,-4)
   \put(10,-4){\includegraphics{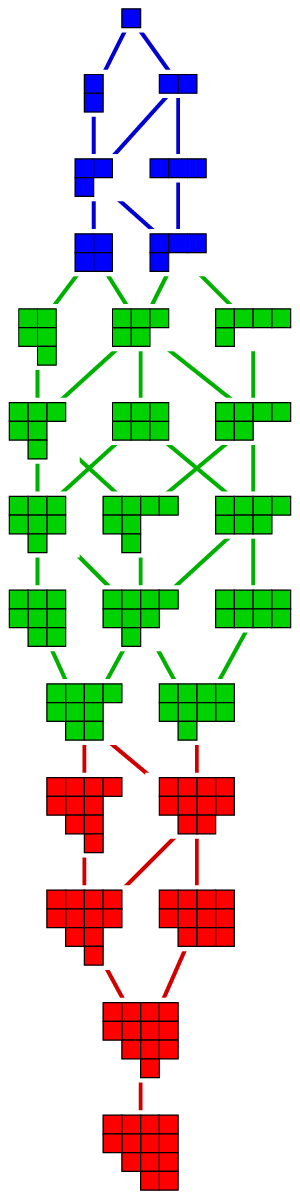}}
  \end{picture}
  \qquad
  \begin{picture}(100,350)(0,-4)
   \put(10,-4){\includegraphics{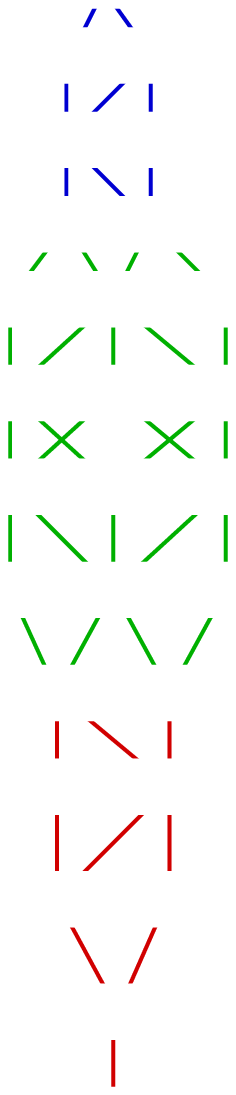}}
                    \put(45,327){1}
         \put(31,307){1}     \put(56,307){1}
         \put(31,283){2}     \put(56,283){1}
         \put(31,258){2}     \put(56,258){3} 
     \put(15,236){2} \put(45,236){5} \put(76,236){3}
     \put(15,209){7} \put(45,209){5} \put(76,209){8}
     \put(12,182){12}\put(42,182){15}\put(72,182){13}
     \put(12,152){12}\put(42,152){40}\put(72,152){13}
        \put(25,122){52}          \put(58,122){53}
        \put(25, 95){52}          \put(55, 95){105}
        \put(22, 62){157}         \put(55, 62){105}
                    \put(39, 31){262}
                    \put(39,  0){262}
  \end{picture} 
\]
\end{minipage}

\providecommand{\bysame}{\leavevmode\hbox to3em{\hrulefill}\thinspace}
\providecommand{\MR}{\relax\ifhmode\unskip\space\fi MR }
\providecommand{\MRhref}[2]{%
  \href{http://www.ams.org/mathscinet-getitem?mr=#1}{#2}
}
\providecommand{\href}[2]{#2}

\end{document}